\documentclass[preprint,12pt]{elsarticle}




\addtolength{\topmargin}{-9mm}
\setlength{\oddsidemargin}{5mm}  
\setlength{\evensidemargin}{0mm}
\setlength{\textwidth}{16cm}
\setlength{\textheight}{21cm}    

\usepackage{color}
\usepackage{amssymb}
\usepackage{amsthm}
\usepackage{amsmath}
\usepackage{epic}
\usepackage{CJK}
\usepackage{setspace}
\usepackage{bm}
\newtheorem{thm}{Theorem}[section]

\newtheorem{lem}[thm]{Lemma}

\newtheorem{con}{Conjecture}


\journal{~~}

\begin{document}
\begin{spacing}{1.15}
\begin{CJK*}{GBK}{song}
\begin{frontmatter}
\title{\textbf{Bollob\'{a}s-Nikiforov conjecture holds asymptotically almost surely}}

\author[label a]{Chunmeng Liu\corref{cor}}\ead{liuchunmeng214@nenu.edu.cn/liuchunmeng0214@126.com}
\author[label b]{Changjiang Bu}\ead{buchangjiang@hrbeu.edu.cn}
\cortext[cor]{Corresponding author}

\address{
\address[label a]{Academy for Advanced Interdisciplinary Studies, Northeast Normal University, Changchun 130024, PR China}
\address[label b]{College of Mathematical Sciences, Harbin Engineering University, Harbin 150001, PR China}
}

\begin{abstract}
Bollob\'{a}s and Nikiforov (J. Combin. Theory Ser. B. 97 (2007) 859-865) conjectured that for a graph $G$ with $e(G)$ edges and the clique number $\omega(G)$, then
\begin{align*}
\lambda_{1}^{2}+\lambda_{2}^{2}\leq 2e(G)\left(1-\frac{1}{\omega(G)}\right), 
\end{align*}
where $\lambda_{1}$ and $\lambda_{2}$ are the largest and  the second largest eigenvalues of the adjacency matrix of $G$, respectively. 
In this paper, we prove that for a sequence of random graphs the conjecture holds true with probability tending to one as the number of vertices tends to infinity.
\end{abstract}

\begin{keyword}
Bollob\'{a}s-Nikiforov conjecture; Random adjacency matrix
\\
\emph{AMS classification:} 05C50, 15B52
\end{keyword}
\end{frontmatter}

\section{Introduction}

The graphs under consideration in this paper are simple and undirected. 
For a graph $G$ with $n$ vertices, we  denote the number of edges in $G$ by $e(G)$. 
The clique number of $G$ is denote by $\omega(G)$.
Let $A(G)$ denote the adjacency matrix of $G$. 
The eigenvalues of $A(G)$ are denote by $\lambda_{1}\geq \lambda_{2}\geq\cdots\geq\lambda_{n}$. 

In 2007, Bollob\'{a}s and Nikiforov \cite{Nikiforov_2007} proposed the following conjecture.
\begin{con}\textup{\cite{Nikiforov_2007}}\label{con_1}
Let $G$ be a graph with $e(G)$ edges and the clique number $\omega(G)$. 
Then
\begin{align*}
\lambda_{1}^{2}+\lambda_{2}^{2}\leq 2e(G)\left(1-\frac{1}{\omega(G)}\right).
\end{align*}
\end{con}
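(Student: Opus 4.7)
The plan is to reformulate the conjecture via the trace identity $\sum_{i=1}^{n}\lambda_{i}^{2}=\operatorname{tr}(A(G)^{2})=2e(G)$, which recasts it in the equivalent spectral-tail form
\[
\sum_{i\geq 3}\lambda_{i}^{2}\;\geq\;\frac{2e(G)}{\omega(G)}.
\]
In this form the task becomes to show that no rank-$2$ symmetric matrix can approximate $A(G)$ in Frobenius norm to within better than a $(1-1/\omega(G))$ fraction of $\|A(G)\|_{F}^{2}=2e(G)$. As a first move I would invoke the Courant--Fischer identity $\lambda_{1}^{2}+\lambda_{2}^{2}=\max_{U^{T}U=I_{2}}\|AU\|_{F}^{2}$, so that everything reduces to bounding a quadratic functional over the Stiefel manifold of $n\times 2$ orthonormal frames.

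Next I would try to upgrade Nikiforov's spectral Tur\'{a}n bound $\lambda_{1}^{2}\leq 2e(G)(1-1/\omega(G))$---whose proof applies the Motzkin--Straus inequality to the probability vector obtained from $v_{1}\circ v_{1}$---to the rank-$2$ setting. Writing the columns of an optimal $U$ as $u$ and $w$, Motzkin--Straus applied separately to $u\circ u$ and $w\circ w$ would give $u^{T}Au\leq(1-1/\omega(G))\|u\|_{1}^{2}$ and similarly for $w$; the target would then follow from a Cauchy--Schwarz-type control linking $\|u\|_{1}^{2}+\|w\|_{1}^{2}$ to $2e(G)$. Complementarily, I would test structured rank-$2$ subspaces built from indicator-like vectors on a maximum clique in order to locate where slack in the argument is unavoidable.

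The principal obstacle is the rotational ambiguity of the top-$2$ eigenspace: the orthonormal pair $(u,w)$ can be freely rotated, and in general has entries of mixed signs, so any argument that relies on nonnegativity of a single probability vector (as Motzkin--Straus does) fails to transfer cleanly. A fallback strategy is induction on $\omega(G)$: removing a vertex of a maximum clique decreases $\omega(G)$ by at most one and $e(G)$ by at most $\omega(G)-1$, while Cauchy interlacing controls the change in $\lambda_{1}^{2}+\lambda_{2}^{2}$. The base cases include complete multipartite graphs, where the inequality is already tight, so any induction must be sharp enough to recognise these extremal configurations simultaneously---which is precisely the difficulty that has so far restricted partial proofs in the literature to limited ranges of $\omega(G)$ or to specific families. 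Given this barrier for deterministic graphs, the natural retreat is to a typical-case (random) statement, where concentration of $\lambda_{1},\lambda_{2},e(G),$ and $\omega(G)$ replaces the need for a worst-case comparison and lets one verify the inequality for almost all graphs.
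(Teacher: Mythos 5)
The statement you are asked to prove is labelled as a \emph{conjecture} in the paper, and the paper contains no proof of it: what the authors actually prove is Theorem 1.1, the much weaker assertion that the inequality holds asymptotically almost surely for $G(n,p)$, via Juh\'{a}sz's law for $\lambda_{1}/n$, the F\"{u}redi--Koml\'{o}s bound on $\lambda_{2}$, the Grimmett--McDiarmid clique-number asymptotics, and Hoeffding's inequality for $e(G_{n})$. Your proposal likewise does not prove the conjecture --- it is a plan whose decisive steps you yourself flag as open --- so the honest verdict is that there is a genuine gap, indeed the same gap that keeps the conjecture open. Your closing ``retreat'' to the typical-case statement is exactly the content of the paper, but that is a different and strictly weaker claim.

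Two of your intermediate steps are also concretely wrong, not merely incomplete. First, the identity $\lambda_{1}^{2}+\lambda_{2}^{2}=\max_{U^{T}U=I_{2}}\lVert AU\rVert_{F}^{2}$ fails: the right-hand side equals the sum of the two largest eigenvalues of $A^{2}$, which is $\lambda_{1}^{2}+\max\{\lambda_{2}^{2},\lambda_{n}^{2}\}$, and for $K_{a,b}$ (where $\lambda_{2}=0$ and $\lambda_{n}=-\sqrt{ab}$) it equals $2e(G)$, strictly exceeding the target bound $2e(G)\left(1-\tfrac{1}{\omega}\right)=e(G)$. So the Stiefel-manifold functional you propose to bound is genuinely larger than the conjectured right-hand side, and no argument can push it below; any rank-$2$ approach must single out the \emph{top} two eigenvalues of $A$ rather than of $A^{2}$, which is precisely where the sign obstruction you mention becomes unavoidable rather than a technicality. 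Second, in the induction fallback, deleting a vertex of a maximum clique removes $\deg(v)$ edges, which can be as large as $n-1$, not at most $\omega(G)-1$, and $\omega$ need not decrease at all if other maximum cliques survive; so the proposed induction does not close either. If your goal is to reproduce what the paper establishes, the correct target is Theorem 1.1, where concentration of $\lambda_{1},\lambda_{2},\omega(G_{n})$ and $e(G_{n})$ does all the work and none of the deterministic machinery above is needed.
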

This conjecture has been proven for several special cases, including triangle-free graphs \cite{Lin_2011}, regular graphs \cite{Zhang_2024}, weakly perfect graphs, and Kneser graphs\cite{Elphick_2024}.

In this paper, we obtain that the Bollob\'{a}s-Nikiforov conjecture holds true with probability tending to one as the number of vertices tends to infinity for a sequence of random graphs. 

Let $0<p<1$ be  a fixed probability and let $G_{n}$ denote a random graph with $n$ vertices where each edge appears with $p$, independently of all other edges. 
Specifically, the random variables $e_{ij}$ for $1\leq i<j$ are defined as follows:
\begin{align*}
e_{ij}=
\begin{cases}
1, \ \textup{if $(i,j)$ is an edge of $G_{n}$},\\
0, \ \textup{if $(i, j)$ is not an edge of $G_{n}$},
\end{cases}
\end{align*}
are independent random variables with $\mathbb{P}(e_{ij}=1)=p$, $\mathbb{P}(e_{ij}=0)=1-p$.
Let $G_{n}$ be the above random graph and the adjacency matrix given by $A(G_{n})=(e_{ij})$. 

\begin{thm}\label{thm_1}
Suppose that $G_{n}$ is a random graph as described above.
For $0<\epsilon<1$, there exists an $n^{\prime}$ such that for all $n>n^{\prime}$, 
\begin{align*}
\mathbb{P}\left(\lambda_{1}^{2}+\lambda_{2}^{2}\leq 2e(G_{n})\left(1-\frac{1}{\omega(G_{n})}\right)\right)\geq 1-\exp\left(-Cn(n-1)\right),
\end{align*}
where $C=\epsilon^{2}p^{2}$.
\end{thm}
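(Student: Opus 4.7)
The plan is to control the three sources of randomness in the BN inequality for $G_n$: the edge count $e(G_n)$, the top two eigenvalues $\lambda_1,\lambda_2$ of $A(G_n)$, and the clique number $\omega(G_n)$. The exponent $C=\epsilon^2p^2$ in the theorem matches exactly the Hoeffding rate for $e(G_n)$, which signals that the edge count is the intended bottleneck while the other two concentrations should hold with probability super-exponentially close to one so they are absorbed into a single Hoeffding term.

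First, since the $\binom{n}{2}$ indicators $e_{ij}$ are i.i.d.\ Bernoulli$(p)$, Hoeffding's inequality gives
\begin{equation*}
\mathbb{P}\left(e(G_n)<(1-\epsilon)\,p\binom{n}{2}\right)\le\exp\left(-\epsilon^2p^2n(n-1)\right),
\end{equation*}
so $2e(G_n)\ge(1-\epsilon)pn(n-1)$ on the complementary event. Second, decompose $A(G_n)=p(J-I)+M_n$; the expected matrix has spectrum $\{p(n-1),-p,\ldots,-p\}$, and Talagrand's inequality for the $1$-Lipschitz functional $M\mapsto\|M\|$ gives $\|M_n\|\le\delta n$ with probability at least $1-\exp(-c\delta^2 n^2)$ for every fixed $\delta>0$. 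Weyl's inequality then delivers $\lambda_1\le p(n-1)+\delta n$ and $|\lambda_2|\le p+\delta n$, hence $\lambda_1^2+\lambda_2^2\le(p+\delta)^2 n^2+O(n)$. Third, a standard clique-counting argument shows $\mathbb{P}(\omega(G_n)<k)\le\exp(-c_k n^k)$ for every fixed $k\ge 2$, which is super-exponentially smaller than the Hoeffding bound.

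On the intersection of these three events, the BN inequality reduces after dividing by $pn(n-1)$ to the leading-order condition $(p+\delta)^2\le(1-\epsilon)(1-1/k)\,p$, which is satisfied for $n$ sufficiently large once $\delta$ is small and $k$ is large enough that $(1-\epsilon)(1-1/k)>p$. The main obstacle is a delicate calibration: $\delta$ and $k$ must be chosen so that the failure probabilities $\exp(-c\delta^2 n^2)$ and $\exp(-c_k n^k)$ are asymptotically negligible compared with $\exp(-\epsilon^2p^2n(n-1))$. The clique estimate is effortless for $k\ge 3$, but the spectral estimate demands $c\delta^2>\epsilon^2p^2$, and the deterministic comparison $(p+\delta)^2<(1-\epsilon)(1-1/k)\,p$ further constrains the usable range of $\epsilon$ (essentially to $\epsilon<1-p$ as $k\to\infty$); a small perturbation of $\epsilon$ within the Hoeffding step is then needed to fit everything under the single exponent $\exp(-\epsilon^2p^2n(n-1))$ stated in the theorem.
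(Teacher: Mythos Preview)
Your outline is sound but takes a genuinely different route from the paper. The paper does not use Talagrand concentration, Weyl's inequality, or a fixed-$k$ clique estimate; instead it cites three classical limit results---Juh\'asz ($\lambda_1/n\to p$ a.s.), F\"uredi--Koml\'os ($\lambda_2\le 2\sqrt{p(1-p)n}+C_0n^{1/3}\log n$ w.h.p.), and Grimmett--McDiarmid ($\omega(G_n)\sim 2\log n/\log(1/p)$ a.s.)---and simply asserts that for $n$ large the corresponding events hold with probability~$1$, so that Hoeffding for $e(G_n)$ is the \emph{only} contributor to the failure probability and the exponent $\epsilon^2p^2$ falls out with no calibration whatsoever. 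A separate deterministic lemma (which requires $p\le(1-\epsilon)^2/(1+2\epsilon)$, a restriction analogous to the one you found) then compares the spectral upper bound to $(1-\epsilon)pn(n-1)\bigl(1-\log(1/p)/(2(1-\epsilon)\log n)\bigr)$. What the paper's shortcut buys is precisely the avoidance of the calibration obstacle you flag; what it costs is a rather loose reading of almost-sure and w.h.p.\ convergence as ``probability $=1$ for each large $n$''. Your path is more honest about the tails, but two points deserve care: first, your clique bound $\mathbb{P}(\omega(G_n)<k)\le\exp(-c_kn^k)$ is too optimistic---Janson's inequality yields only $\exp(-c_kn^2)$ with $c_k\to 0$ as $k\to\infty$, so the clique tail competes with the Hoeffding exponent just as the spectral tail does and is not ``effortless for $k\ge 3$''; second, because the Talagrand constant enters your deterministic comparison $(p+\delta)^2<(1-\epsilon)(1-1/k)p$, getting the \emph{exact} exponent $\epsilon^2p^2$ via your route forces a genuine joint constraint on $(p,\epsilon,\delta,k)$ rather than a mere perturbation of~$\epsilon$.
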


The remainder of this paper is organized as follows: 
In Section 2, we introduce definitions and lemmas necessary for the proofs.
In Section 3, we present the proofs of the conclusions of this paper.

\section{Preliminaries}

Let $A_{n}=(\xi_{ij})$ be an $n\times n$ symmetric matrix where $\xi_{ii}=0$, and $\xi_{ij}$ for $i>j$ are independent random variables. 
Suppose that $\mathbb{P}(\xi_{ij}=1)=p$, and $\mathbb{P}(\xi_{ij}=0)=1-p$. 
For the random matrix $A_{n}$, Juh\'{a}sz \cite{Juhasz_1981} gave the following conclusion.
\begin{lem}\textup{\cite{Juhasz_1981}}\label{lem_Juhasz}
If $\lambda_{1}$ is the largest eigenvalue of the matrix $A_{n}$, then
\begin{align*}
\mathbb{P}\left(\lim_{n\rightarrow\infty}\frac{\lambda_{1}}{n}=p\right)=1.
\end{align*}
\end{lem}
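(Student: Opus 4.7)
The plan is to compare the two sides of the inequality directly, exploiting concentration of $\lambda_1$, $\lambda_2$, $e(G_n)$, and divergence of $\omega(G_n)$. Heuristically $\lambda_1^2+\lambda_2^2\approx p^2 n^2$ while $2e(G_n)(1-1/\omega(G_n))\approx pn^2$; since $p<1$, the asymptotic gap is of order $p(1-p)n^2$, which dwarfs the probabilistic fluctuations for large $n$. The quantitative form of this heuristic is what should produce the stated rate $\exp(-\epsilon^2p^2n(n-1))$.

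First I would upper-bound $\lambda_1^2+\lambda_2^2$. By Lemma \ref{lem_Juhasz}, for any $\epsilon_1>0$, $\lambda_1\le(p+\epsilon_1)n$ on an event of probability tending to $1$. For $\lambda_2$ I would use the fact (a companion conclusion of Juh\'{a}sz's result, or a consequence of Wigner-type norm bounds on the centered matrix $A(G_n)-p(J-I)$) that $|\lambda_2|=o(n)$ almost surely, so $\lambda_2^2\le \epsilon_2^2 n^2$ on a further high-probability event. Combining,
\begin{equation*}
\lambda_1^2+\lambda_2^2\;\le\;\bigl((p+\epsilon_1)^2+\epsilon_2^2\bigr)\,n^2.
\end{equation*}

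Next I would lower-bound $2e(G_n)(1-1/\omega(G_n))$. Applying Hoeffding's inequality to $e(G_n)=\sum_{i<j}e_{ij}$, a sum of $\binom{n}{2}$ independent Bernoulli$(p)$ variables, yields
\begin{equation*}
\mathbb{P}\!\left(e(G_n)\ge (1-\epsilon)\,p\binom{n}{2}\right)\ge 1-\exp\bigl(-\epsilon^2 p^2 n(n-1)\bigr),
\end{equation*}
which is precisely the target rate with $C=\epsilon^2 p^2$. For the clique number, $\omega(G_n)\to\infty$ almost surely for fixed $p\in(0,1)$ (indeed $\omega(G_n)\sim 2\log_{1/p}n$), so $1-1/\omega(G_n)\ge 1-\delta$ on a high-probability event for any fixed $\delta>0$. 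Hence $2e(G_n)(1-1/\omega(G_n))\ge (1-\epsilon)(1-\delta)\,p\,n(n-1)$ on the relevant event. Choosing $\epsilon_1,\epsilon_2,\delta$ small enough relative to $p$ and $\epsilon$, the inequality $(p+\epsilon_1)^2+\epsilon_2^2<(1-\epsilon)(1-\delta)p$ holds because $p^2<p$, which closes the comparison for all sufficiently large $n$.

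The main obstacle is probabilistic bookkeeping: Lemma \ref{lem_Juhasz} is stated as an almost-sure limit rather than a finite-$n$ tail bound, so the failure probabilities of the $\lambda_1$, $\lambda_2$, and $\omega(G_n)$ events have to be absorbed into the error $\exp(-\epsilon^2 p^2 n(n-1))$ without degrading the exponent. This requires sharpening those auxiliary statements so that each fails with probability $\exp(-\Omega(n^2))$ --- achievable via standard spectral concentration for random symmetric matrices and via second-moment/Janson-type bounds for clique counts --- so that a single union bound with the Hoeffding estimate preserves the stated rate.
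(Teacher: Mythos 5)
There is a fundamental mismatch here: the statement you were asked to prove is Lemma \ref{lem_Juhasz} (Juh\'{a}sz's theorem that $\lambda_{1}/n\rightarrow p$ almost surely for the random symmetric matrix $A_{n}$), but what you have written is a proof sketch of Theorem \ref{thm_1}, the paper's main result. Worse, your argument explicitly invokes Lemma \ref{lem_Juhasz} in its first step (``By Lemma \ref{lem_Juhasz}, for any $\epsilon_{1}>0$, $\lambda_{1}\le(p+\epsilon_{1})n$ \dots''), so as a proof of that lemma it is circular. Nothing in your proposal engages with why the largest eigenvalue of a Bernoulli random matrix concentrates at $pn$; Hoeffding bounds on $e(G_{n})$, the clique number asymptotics, and the comparison $(p+\epsilon_{1})^{2}+\epsilon_{2}^{2}<(1-\epsilon)(1-\delta)p$ are all irrelevant to that question. (For what it is worth, the paper does not prove the lemma either --- it is cited from the literature --- but your submission cannot stand as a proof of it.)

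A genuine proof of the lemma would proceed along the following standard lines: write $A_{n}=p(J-I)+E_{n}$ where $J$ is the all-ones matrix and $E_{n}=A_{n}-p(J-I)$ is a symmetric matrix with independent, mean-zero, bounded entries above the diagonal. The deterministic matrix $p(J-I)$ has largest eigenvalue $p(n-1)$, and by the F\"{u}redi--Koml\'{o}s/Wigner-type bound (essentially Lemma \ref{lem_Furefi} applied to the centered matrix, or a trace-moment argument) the spectral norm of $E_{n}$ is $O(n^{1/2})$ with probability $1$ for all large $n$. Weyl's perturbation inequality then gives $|\lambda_{1}(A_{n})-p(n-1)|\le\|E_{n}\|=O(n^{1/2})$, and dividing by $n$ and applying Borel--Cantelli yields $\lambda_{1}/n\rightarrow p$ almost surely. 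If you want to salvage your text as a proof of Theorem \ref{thm_1}, it is broadly the same strategy as the paper's, but you would still need to make the bookkeeping precise: the paper does so by isolating the deterministic inequality of Lemma \ref{lem_1} and the three events $\mathcal{X},\mathcal{Y},\mathcal{Z}$, using the explicit F\"{u}redi--Koml\'{o}s bound $\lambda_{2}\le 2(p(1-p)n)^{1/2}+C_{0}n^{1/3}\log n$ rather than an unquantified $o(n)$, and it also restricts $p$ to the range $\bigl(0,\tfrac{(1-\epsilon)^{2}}{1+2\epsilon}\bigr]$ in Lemma \ref{lem_1}, a constraint your sketch glosses over.
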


For the second largest eigenvalue of $A_{n}$, a conclusion can be derived from F\"{u}redi and Koml\'{o}s \cite{Furedi_1981}.
\begin{lem}\textup{\cite{Furedi_1981}}\label{lem_Furefi}
For the second largest eigenvalue $\lambda_{2}$ of $A_{n}$, there exists a constant $C_{0}>0$ such that with probability tending to $1$,
\begin{align*}
\lambda_{2}\leq 2(p(1-p)n)^{1/2}+C_{0} n^{1/3}\log n.
\end{align*}
\end{lem}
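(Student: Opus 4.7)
The plan is to reduce the bound on $\lambda_{2}(A_n)$ to the operator norm of the centered matrix $W_n = A_n - \mathbb{E}\,A_n$, and then invoke the Wigner-type trace/moment method in the spirit of F\"uredi--Koml\'os.

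First I would write $A_n = M_n + W_n$ where $M_n = \mathbb{E}\,A_n = p(J_n - I_n)$, with $J_n$ the all-ones matrix. The matrix $M_n$ has spectrum $\{p(n-1),-p,\dots,-p\}$, so in particular $\lambda_{2}(M_n) = -p$. The matrix $W_n = (w_{ij})$ is symmetric with $w_{ii} = 0$, and for $i<j$ the entries $w_{ij} = \xi_{ij}-p$ are independent, bounded, centered random variables of common variance $\sigma^{2} = p(1-p)$. Weyl's inequality for sums of Hermitian matrices gives
\[
\lambda_{2}(A_n) \;\le\; \lambda_{2}(M_n) + \lambda_{1}(W_n) \;=\; -p + \|W_n\|,
\]
so it suffices to show that $\|W_n\| \le 2\sqrt{p(1-p)n} + C_0 n^{1/3}\log n$ with probability tending to $1$.

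To bound $\|W_n\|$ I would apply the moment method. For any integer $k \ge 1$,
\[
\|W_n\|^{2k} \;\le\; \mathrm{tr}(W_n^{2k}) \;=\; \sum_{i_0,\dots,i_{2k-1}} w_{i_0 i_1}w_{i_1 i_2}\cdots w_{i_{2k-1} i_0},
\]
and taking expectations reduces the right side to a sum over closed walks of length $2k$ in $K_n$. Independence and centering kill any walk that traverses some edge an odd number of times. Following the F\"uredi--Koml\'os enumeration, the dominant contribution comes from walks that trace out a plane tree on $k+1$ vertices with each edge used exactly twice; these contribute $\tfrac{1}{k+1}\binom{2k}{k}\,n(n-1)\cdots(n-k)\,\sigma^{2k}$. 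Non-tree closed walks lose a factor of order $k^{O(1)}/n$ per excess edge and are negligible as long as $k = o(n^{1/2})$. Consequently $\mathbb{E}\,\mathrm{tr}(W_n^{2k}) \le n\bigl(2\sigma\sqrt{n}\bigr)^{2k}(1+o(1))$. By Markov's inequality $\mathbb{P}(\|W_n\| \ge t) \le t^{-2k}\,\mathbb{E}\,\mathrm{tr}(W_n^{2k})$, taking $t = 2\sigma\sqrt{n} + C_0 n^{1/3}\log n$ and $k \asymp n^{1/3}/\log n$ forces the right-hand side to $o(1)$ for $C_0$ sufficiently large in terms of $p$, which yields the claimed bound.

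The principal obstacle lies in the combinatorial bookkeeping of the moment method: one must control non-tree closed walks uniformly as $k$ grows with $n$, and balance the Stirling asymptotic $\binom{2k}{k}\sim 4^k/\sqrt{\pi k}$ against the penalty $(1 + C_0 n^{1/3}\log n/(2\sigma\sqrt{n}))^{-2k}$ coming from Markov, so that precisely the error term $n^{1/3}\log n$ emerges (rather than something larger like $n^{1/2}$). This is the heart of the F\"uredi--Koml\'os sharp trace estimate, and no essentially new idea is needed beyond reproducing their combinatorial analysis of closed-walk skeletons.
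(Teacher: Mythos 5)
The paper offers no proof of this lemma --- it is quoted directly from F\"uredi and Koml\'os \cite{Furedi_1981} --- and your sketch is a correct outline of precisely their argument: center the matrix, dispose of the rank-one mean part via Weyl's inequality (giving $\lambda_2(A_n)\le -p+\|W_n\|$), and bound $\|W_n\|$ by the trace/moment method with $k$ growing polynomially in $n$. The only slight inaccuracy is your claim that non-tree closed walks are negligible for $k=o(n^{1/2})$; the standard F\"uredi--Koml\'os bookkeeping (a factor of order $k^{3}/n$ per coincidence, summed as a geometric series) requires $k=o(n^{1/3})$, which your eventual choice $k\asymp n^{1/3}/\log n$ satisfies, so the argument still closes.
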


\begin{lem}\textup{\cite{Grimmett_1975}}\label{lem_grimmett}
Let the random variable $\omega(G_{n})$ be the clique number of a random graph $G_{n}$ where each edge occurs with probability $p$. 
Then 
\begin{align*}
\mathbb{P}\left(\lim_{n\rightarrow\infty}\frac{\omega(G_{n})}{\log n}=\frac{2}{\log(1/p)}\right)=1.
\end{align*}
\end{lem}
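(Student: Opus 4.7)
The plan is to establish this classical result of Grimmett and McDiarmid via the standard first-and-second moment method applied to the number of $k$-cliques in $G_{n}$, combined with a coupling argument to promote convergence in probability to almost sure convergence.

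First, let $X_{k}$ denote the number of $k$-cliques in $G_{n}$, so that
$$\mathbb{E}[X_{k}] = \binom{n}{k} p^{\binom{k}{2}}.$$
Using Stirling's formula, I would locate the critical value $k^{*}(n) = 2\log n / \log(1/p)$ at which $\log \mathbb{E}[X_{k}]$ changes sign: for $k = \lceil(1+\epsilon)k^{*}(n)\rceil$ one has $\mathbb{E}[X_{k}]\to 0$, whereas for $k = \lfloor(1-\epsilon)k^{*}(n)\rfloor$ one has $\mathbb{E}[X_{k}]\to \infty$. These two regimes drive the two halves of the argument.

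For the upper bound $\limsup_{n}\omega(G_{n})/\log n \le 2/\log(1/p)$, Markov's inequality gives $\mathbb{P}(\omega(G_{n})\geq k) \leq \mathbb{E}[X_{k}]$; with $k = \lceil(1+\epsilon)k^{*}(n)\rceil$ the right-hand side decays at a super-polynomial rate, so Borel--Cantelli applies. For the lower bound I would apply Chebyshev's inequality at $k = \lfloor(1-\epsilon)k^{*}(n)\rfloor$. Writing $X_{k} = \sum_{S} Y_{S}$ with $Y_{S}$ the indicator that the $k$-subset $S$ forms a clique and noting $\mathrm{Cov}(Y_{S},Y_{T})=0$ whenever $|S\cap T|\leq 1$, one obtains
\begin{align*}
\frac{\mathrm{Var}(X_{k})}{\mathbb{E}[X_{k}]^{2}} = \sum_{i=2}^{k} \frac{\binom{k}{i}\binom{n-k}{k-i}}{\binom{n}{k}}\bigl(p^{-\binom{i}{2}} - 1\bigr).
\end{align*}
Showing this ratio tends to zero at the chosen $k$ gives $\mathbb{P}(X_{k}=0)\to 0$, hence $\omega(G_{n})\geq (1-\epsilon)k^{*}(n)$ with probability tending to one. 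To upgrade to the almost sure statement, I would couple the sequence $\{G_{n}\}$ as increasing subgraphs of a single random graph on $\mathbb{N}$, so that $\omega(G_{n})$ is monotone non-decreasing; an $\epsilon$-exhaustion combined with the two one-sided bounds yields the a.s. limit.

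The main obstacle is the second moment variance estimate. Each summand involves two competing factors, the combinatorial ratio $\binom{k}{i}\binom{n-k}{k-i}/\binom{n}{k}$, which decays sharply as $i$ approaches $k$, balanced against the explosive term $p^{-\binom{i}{2}}$. Establishing uniform smallness across $i\in\{2,\dots,k\}$ at precisely the critical $k$ requires splitting the sum into small-$i$ and large-$i$ regimes and handling each with tailored asymptotic bounds; this is the technical heart of the proof. A secondary subtlety is that in-probability convergence along the subsequence $k=\lfloor(1-\epsilon)k^{*}(n)\rfloor$ does not automatically deliver an almost sure limit of $\omega(G_{n})/\log n$, so monotonicity under the coupling above must be exploited carefully.
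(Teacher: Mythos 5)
This lemma is quoted verbatim from Grimmett and McDiarmid; the paper supplies no proof of its own, so there is nothing internal to compare your attempt against. Your outline is the standard first-and-second-moment argument for the clique number of $G(n,p)$, which is essentially the route taken in the cited source, and it is correct as a plan: at $k=\lceil(1+\epsilon)k^{*}\rceil$ the expected number of $k$-cliques is $\exp(-c(\log n)^{2})$, which is summable, so Markov plus Borel--Cantelli gives the almost sure upper bound, and your covariance decomposition and the resulting formula for $\mathrm{Var}(X_{k})/\mathbb{E}[X_{k}]^{2}$ are both right. The one substantive step you defer --- showing that $\sum_{i=2}^{k}\binom{k}{i}\binom{n-k}{k-i}\binom{n}{k}^{-1}\bigl(p^{-\binom{i}{2}}-1\bigr)\to 0$ at $k=\lfloor(1-\epsilon)k^{*}\rfloor$ --- is genuinely the technical core, so as written this is a sketch rather than a complete proof; the standard way to finish is to show each summand is dominated by its values near $i=2$ and $i=k$, yielding a bound of order $(\log n)^{O(1)}/n^{2}$, which is itself summable in $n$, so Borel--Cantelli already delivers the almost sure lower bound and your monotone-coupling step becomes a convenience rather than a necessity.
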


\section{Proof of the theorem}

The main idea of the proof is based on \cite{Rocha_2020}.

\begin{lem}\label{lem_1}
Let $0<\epsilon<1$. 
If $p\in\left(0,\frac{(1-\epsilon)^{2}}{1+2\epsilon}\right]$, then there exists $n_{0}$ such that for all $n>n_{0}$, 
\begin{align*}
&(1+\epsilon)p^{2}n^{2}+4p(1-p)n+4C_{0}(p(1-p))^{1/2}n^{5/6}\log n+C_{0}^{2}n^{2/3}(\log n)^{2}\\
&\leq p(1-\epsilon)n(n-1)\left(1-\frac{\log(1/p)}{2(1-\epsilon)\log n}\right),
\end{align*}
where $C_{0}>0$ is a constant.
\end{lem}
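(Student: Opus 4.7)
The plan is to rearrange the desired inequality so that everything is collected on one side, isolate the leading $n^2$ coefficient, and show that the hypothesis on $p$ forces this coefficient to be a strictly positive constant depending only on $\epsilon$, while all remaining terms are $o(n^2)$ as $n \to \infty$.

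Concretely, I would first compute the difference between the right-hand side and the left-hand side and observe that the dominant $n^2$ contribution is
$$p\,n^2\bigl[(1-\epsilon) - (1+\epsilon)p\bigr].$$
This is strictly positive whenever $p < (1-\epsilon)/(1+\epsilon)$. The hypothesis gives us more: since $p \leq (1-\epsilon)^2/(1+2\epsilon)$, a short algebraic manipulation yields
$$(1-\epsilon) - (1+\epsilon)p \;\geq\; (1-\epsilon)\cdot\frac{\epsilon(2+\epsilon)}{1+2\epsilon} \;=:\; \delta(\epsilon) > 0,$$
so the leading $n^2$ coefficient is at least $p\,\delta(\epsilon)$, a positive constant independent of $n$.

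I would then bound every remaining contribution as $o(n^2)$. The subtracted right-hand piece $\frac{p\,n(n-1)\log(1/p)}{2\log n}$ is of order $n^2/\log n$; the $-p(1-\epsilon)n$ piece arising from expanding $n(n-1)$ in the right-hand side, as well as $4p(1-p)n$, are of order $n$; the term $4C_0(p(1-p))^{1/2} n^{5/6}\log n$ is of order $n^{5/6}\log n$; and $C_0^2 n^{2/3}(\log n)^2$ is of order $n^{2/3}(\log n)^2$. Each of these is $o(n^2)$, so once $n$ is sufficiently large the positive $\Theta(n^2)$ term dominates, and the inequality holds.

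The only mild subtlety is the $n^2/\log n$ contribution: although smaller than $n^2$, it decays only logarithmically, so the threshold $n_0$ must be chosen large enough that $\log(1/p)/(2\log n) < \delta(\epsilon)/4$ (for instance), at which point this term is absorbed into a quarter of the leading coefficient. The other lower-order contributions are of genuine polynomial lower order and are absorbed similarly. No delicate probabilistic or analytic estimate is required; the lemma reduces to a deterministic asymptotic comparison, and choosing $n_0$ to simultaneously handle each lower-order term yields the result.
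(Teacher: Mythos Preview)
Your argument is correct and follows the same underlying idea as the paper: both proofs amount to showing that, after dividing through by $n^{2}$, the leading constants satisfy the required inequality while every remaining contribution is $o(1)$.

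The organization differs slightly. The paper bounds the two sides separately: it shows that for large $n$ the left side over $n^{2}$ is at most $(1+2\epsilon)p^{2}$ (by proving each of the three lower-order terms is at most $\tfrac{1}{3}\epsilon p^{2}$) and the right side over $n^{2}$ is at least $p(1-\epsilon)^{2}$ (by showing each of the factors $(1-1/n)$ and $\bigl(1-\tfrac{\log(1/p)}{2(1-\epsilon)\log n}\bigr)$ is at least $(1-\epsilon)^{1/2}$). The hypothesis $p\le(1-\epsilon)^{2}/(1+2\epsilon)$ is then exactly the inequality $(1+2\epsilon)p^{2}\le p(1-\epsilon)^{2}$, which explains why this specific threshold appears. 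Your direct computation of $\mathrm{RHS}-\mathrm{LHS}$ is a bit more streamlined and in fact reveals that the weaker condition $p<(1-\epsilon)/(1+\epsilon)$ already suffices for the leading coefficient to be positive; the paper's stronger hypothesis is an artifact of its particular splitting of the error terms.
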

\begin{proof}
Consider the expression
\begin{align*}
(1+\epsilon)p^{2}+4p(1-p)n^{-1}+4C_{0}(p(1-p))^{1/2}n^{-7/6}\log n+C_{0}^{2}n^{-4/3}(\log n)^{2}.
\end{align*}
To analyze this expression, we define a series of thresholds. 
Let $m_{0}=\frac{12(1-p)}{\epsilon p}$. 
For $n>m_{0}$, we observe that
\begin{align*}
4p(1-p)n^{-1}\leq \frac{1}{3}\epsilon p^{2}.
\end{align*}
We define $m_{1}=\left(\frac{12C_{0}(p(1-p))^{1/2}}{\epsilon p^{2}}\right)^{6}$. 
For $n>m_{1}$, it follows that
\begin{align*}
4C_{0}(p(1-p))^{1/2}n^{-7/6}\log n < 4C_{0}(p(1-p))^{1/2}n^{-1/6} \leq \frac{1}{3}\epsilon p^{2}.
\end{align*}
We set $m_{2}=\left(\frac{3C_{0}^{2}}{\epsilon p^{2}}\right)^{3}$. 
For $n>m_{2}$, we have
\begin{align*}
C_{0}^{2}n^{-4/3}(\log n)^{2} < C_{0}^{2}n^{-1/3} \leq \frac{1}{3}\epsilon p^{2},
\end{align*}
the first inequality follows from the fact that $(\log n)^{2}<n$ for $n\geq 1$.
By combining these observations, we can establish a critical threshold $n_{0}^{\prime}$ defined as the maximum of $m_{0}$, $m_{1}$ and $m_{2}$:
\begin{align*}
n_{0}^{\prime}=\max\left\{m_{0},m_{1},m_{2}\right\}
\end{align*}
For $n>n_{0}^{\prime}$, we obtain
\begin{align*}
(1+\epsilon)p^{2}+4p(1-p)n^{-1}+4C_{0}(p(1-p))^{1/2}n^{-7/6}\log n+C_{0}^{2}n^{-4/3}(\log n)^{2}\leq (1+\epsilon)p^{2}+\epsilon p^{2}.
\end{align*}
Given that $p\in\left(0,\frac{(1-\epsilon)^{2}}{1+2\epsilon}\right]$, we can establish the following inequality:
\begin{align*}
(1+\epsilon)p^{2}+\epsilon p^{2}\leq p(1-\epsilon)^{2}.
\end{align*}
For the expression 
\begin{align*}
\left(1-\frac{1}{n}\right)\left(1-\frac{\log(1/p)}{2(1-\epsilon)\log n}\right).
\end{align*}
Let $m_{3}=\frac{1}{1-(1-\epsilon)^{1/2}}$. 
For $n>m_{3}$, we have
\begin{align*}
\left(1-\frac{1}{n}\right)\geq(1-\epsilon)^{1/2}.
\end{align*}
Define $m_{4}=\exp\left(\frac{\log(1/p)}{(1-(1-\epsilon)^{1/2})2(1-\epsilon)}\right)$. 
For $n>m_{4}$, it follows that
\begin{align*}
\left(1-\frac{\log(1/p)}{2(1-\epsilon)\log n}\right)\geq(1-\epsilon)^{1/2}.
\end{align*}
By combining these two conditions, we set $n_{0}^{\prime\prime}=\max\left\{m_{3},m_{4}\right\}$. 
For $n>n_{0}^{\prime\prime}$, then
\begin{align*}
(1-\epsilon)\leq \left(1-\frac{1}{n}\right)\left(1-\frac{\log(1/p)}{2(1-\epsilon)\log n}\right).
\end{align*}
Thus, we get
\begin{align*}
(1+\epsilon)p^{2}+\epsilon p^{2}\leq p(1-\epsilon)^{2}\leq p(1-\epsilon)\left(1-\frac{1}{n}\right)\left(1-\frac{\log(1/p)}{2(1-\epsilon)\log n}\right).
\end{align*}
Let $n_{0}=\max\{n_{0}^{\prime},n_{0}^{\prime\prime}\}$ and $n>n_{0}$. 
Then
\begin{align*}
&(1+\epsilon)p^{2}+4p(1-p)n^{-1}+4C_{0}(p(1-p))^{1/2}n^{-7/6}\log n+C_{0}^{2}n^{-4/3}(\log n)^{2}\\
&\leq p(1-\epsilon)\left(1-\frac{1}{n}\right)\left(1-\frac{\log(1/p)}{2(1-\epsilon)\log n}\right).
\end{align*}
The conclusion is proven by multiplying both sides of the above inequality by $n^{2}$. 
\end{proof}

\begin{proof}[Proof of Theorem \ref{thm_1}]
We use the notation $\mathcal{X}$ to represent the inequality
\begin{align*}
\lambda_{1}^{2}+\lambda_{2}^{2}\leq(1+\epsilon)p^{2}n^{2}+4p(1-p)n+4C_{0}(p(1-p))^{1/2}n^{5/6}\log n+C_{0}^{2}n^{2/3}(\log n)^{2}. 
\end{align*}
Similarly, let $\mathcal{Y}$ represent the inequality
\begin{align*}
2p(1-\epsilon)\frac{n(n-1)}{2}\left(1-\frac{\log(1/p)}{2(1-\epsilon)\log n}\right)\leq2p(1-\epsilon)\frac{n(n-1)}{2}\left(1-\frac{1}{\omega(G_{n})}\right),
\end{align*}
and let $\mathcal{Z}$ denote the inequality 
\begin{align*}
2p(1-\epsilon)\frac{n(n-1)}{2}\left(1-\frac{1}{\omega(G_{n})}\right)\leq2e(G_{n})\left(1-\frac{1}{\omega(G_{n})}\right).
\end{align*}
From now on, we fix $\epsilon$ and $n_{0}$ given by Lemma \ref{lem_1} to obtain that for all $n> n_{0}$, it follows that
\begin{align*}
\mathbb{P}\left(\lambda_{1}^{2}+\lambda_{2}^{2}\leq 2e(G_{n})\left(1-\frac{1}{\omega(G_{n})}\right)\right)
&\geq\mathbb{P}\left(\textup{$\mathcal{X}$ and $\mathcal{Y}$ and $\mathcal{Z}$}\right)\\
&\geq\mathbb{P}\left(\mathcal{X}\right)+\mathbb{P}\left(\mathcal{Y}\right)+\mathbb{P}\left(\mathcal{Z}\right)-2.
\end{align*}
Let $0<\epsilon^{\prime}\leq(1+\epsilon)^{1/2}-1$. 
By Lemma \ref{lem_Juhasz}, there exists $n_{1}$ such that for $n> n_{1}$, 
\begin{align*}
\mathbb{P}\left(\lambda_{1}\leq(1+\epsilon^{\prime})pn\right)=1,
\end{align*}
which implies
\begin{align*}
\mathbb{P}\left(\lambda_{1}^{2}\leq(1+\epsilon^{\prime})^{2}p^{2}n^{2}\right)=1. 
\end{align*}
Since $\epsilon^{\prime}\leq(1+\epsilon)^{1/2}-1$, we have
\begin{align}\label{equ_1}
\mathbb{P}\left(\lambda_{1}^{2}\leq(1+\epsilon)p^{2}n^{2}\right)=1. 
\end{align}
By Lemma \ref{lem_Furefi}, there exists $n_{2}$ such that for $n> n_{2}$, 
\begin{align*}
\mathbb{P}\left(\lambda_{2}\leq 2(p(1-p)n)^{1/2}+C_{0} n^{1/3}\log n\right)=1,
\end{align*}
which implies
\begin{align}\label{eau_2}
\mathbb{P}\left(\lambda_{2}^{2}\leq 4p(1-p)n+4C_{0}(p(1-p))^{1/2}n^{5/6}\log n+C_{0}^{2}n^{2/3}(\log n)^{2}\right)=1.
\end{align}
Let $n> \max\{n_{1},n_{2}\}$. 
Combining (\ref{equ_1}) and (\ref{eau_2}) yields 
\begin{align*}
\mathbb{P}\left(\mathcal{X}\right)=1.
\end{align*}
For fixed $\epsilon$, by Lemma \ref{lem_grimmett}, we can find $n_{3}$ such that for $n> n_{3}$, 
\begin{align*}
\mathbb{P}\left(\omega(G_{n})\geq(1-\epsilon)\frac{2\log n}{\log(1/p)}\right)=1.
\end{align*}
That implies
\begin{align*}
\mathbb{P}\left(\left(1-\frac{\log(1/p)}{2(1-\epsilon)\log n}\right)\leq\left(1-\frac{1}{\omega(G_{n})}\right)\right)=1.
\end{align*}
Thus, we have
\begin{align*}
\mathbb{P}\left(\mathcal{Y}\right)=1.
\end{align*}
Let $n>n^{\prime}=\max\{n_{0},n_{1},n_{2},n_{3}\}$. 
Then
\begin{align*}
\mathbb{P}\left(\lambda_{1}^{2}+\lambda_{2}^{2}\leq 2e(G_{n})\left(1-\frac{1}{\omega(G_{n})}\right)\right)\geq1+1+\mathbb{P}\left(\mathcal{Z}\right)-2\geq\mathbb{P}\left(\mathcal{Z}\right).
\end{align*}
It is a fact that the excepted number of edges in $G_{n}$ is $p\frac{n(n-1)}{2}$. 
Thus, Hoeffding's inequality implies that
\begin{align*}
\mathbb{P}\left(e(G_{n})\leq(1-\epsilon)p\frac{n(n-1)}{2}\right)\leq \exp\left(-2\frac{\epsilon^{2}\left(\frac{pn(n-1)}{2}\right)^{2}}{\frac{n(n-1)}{2}}\right)
=\exp\left(-\epsilon^{2}p^{2}n(n-1)\right)
\end{align*}
for all $\epsilon\in(0,1)$. 
Equivalently, we obain
\begin{align*}
\mathbb{P}\left(e(G_{n})\geq(1-\epsilon)p\frac{n(n-1)}{2}\right)\geq 1-\exp\left(-\epsilon^{2}p^{2}n(n-1)\right).
\end{align*}
That implies
\begin{align*}
\mathbb{P}\left(\mathcal{Z}\right)\geq 1-\exp\left(-\epsilon^{2}p^{2}n(n-1)\right).
\end{align*}
Thus, we conclude that
\begin{align*}
\mathbb{P}\left(\lambda_{1}^{2}+\lambda_{2}^{2}\leq 2e(G_{n})\left(1-\frac{1}{\omega(G_{n})}\right)\right)\geq 1-\exp\left(-\epsilon^{2}p^{2}n(n-1)\right).
\end{align*}
\end{proof}

\section*{Acknowledgement}

This work is supported by the National Natural Science Foundation of China (No. 12071097, 12371344), the Natural Science Foundation for The Excellent Youth Scholars of the Heilongjiang Province (No. YQ2022A002) and the Fundamental Research Funds for the Central Universities.

\vspace{3mm}
\noindent

\end{CJK*}
\end{spacing}
\end{document}